\documentclass[12pt]{amsart}
\setcounter{tocdepth}{1}
\usepackage{amsmath,amsfonts,amssymb,mathrsfs,amstext,amscd,latexsym,amsthm}
\usepackage{comment}
\usepackage{mathtools}
\usepackage{hyperref}
\usepackage{enumitem}
\usepackage{xcolor}
\newcommand{\R}{\ensuremath{\mathbb{R}}}

\newcommand{\N}{\ensuremath{\mathbb{N}}}


\DeclareMathOperator{\tr}{\mathrm{tr}}


\newcommand{\pd}{\partial}
\newcommand{\cd}{\nabla}
\DeclareMathOperator{\area}{\mathrm{area}} 
\DeclareMathOperator{\Rc}{\mathrm{Rc}} 
\DeclareMathOperator{\Rm}{\mathrm{Rm}} 


\newcommand{\inner}[2]{\left\langle #1 \, , \, #2\right\rangle} 

\def\labelitemi{--}
\def\ba #1\ea {\begin{align} #1\end{align}}
\def\bann #1\eann {\begin{align*} #1\end{align*}}
\def\ben #1\een {\begin{enumerate} #1\end{enumerate}}
\def\bi #1\ei {\begin{itemize}\renewcommand\labelitemi{--} #1\end{itemize}}
\theoremstyle{plain}
\newtheorem{thm}{Theorem}
\newtheorem*{thm*}{Theorem}
\newtheorem{lem}[thm]{Lemma}

\newtheorem{claim}[thm]{Claim}
\newtheorem{prop}[thm]{Proposition}

\newtheorem*{conds*}{Conditions}
\newtheorem*{auxconds*}{Ancillary Conditions}
\newtheorem*{props*}{Properties}
\theoremstyle{remark}
\newtheorem{rem}{Remark}

 
\usepackage{cite}

\title[Hamilton's Theorem via mean curvature flow]{Hamilton's Theorem (on the compactness of pinched hypersurfaces) via mean curvature flow.}

\author{Theodora Bourni}
\author{Mat Langford}
\author{Stephen Lynch}

\address{Department of Mathematics, University of Tennessee Knoxville, Knoxville TN, 37996-1320}
\email{tbourni@utk.edu}
\email{mlangford@utk.edu}

\address{School of Mathematical and Physical Sciences, The University of Newcastle, Newcastle, NSW, Australia, 2308}
\email{mathew.langford@newcastle.edu.au}

\address{Eberhard Karls Universit\"at T\"ubingen, Fachbereich Mathematik, Auf der Morgenstelle 10, 72076 T\"{u}bingen, Germany}
\email{stephen.lynch@math.uni-tuebingen.de}

\date{\today}

\begin{document}

\begin{abstract}
We make rigorous and old idea of using mean curvature flow to prove a theorem of Richard Hamilton on the compactness of proper hypersurfaces with pinched, bounded curvature.
\end{abstract}

\maketitle

A famous theorem of Myers \cite{Myers} implies that a complete Riemannian manifold with uniformly positive Ricci curvature is necessarily compact. By the Gauss equation, this implies that a properly immersed hypersurface of Euclidean space $\R^{n+1}$, $n\ge 2$, with uniformly positive second fundamental form is compact. 

Hamilton obtained a scale invariant version of this result \cite{HamiltonPinched}: a smooth, proper, \emph{locally} uniformly convex hypersurface $M^n\to \R^{n+1}$, $n\ge 2$, which is \emph{pointwise pinched}, i.e.
\[
\kappa_1\ge \alpha\kappa_n\;\;\text{for some}\;\;\alpha>0,
\]
where $\kappa_1\le\dots\le\kappa_n$ denote the principal curvatures (eigenvalues of the second fundamental form $A$), is necessarily compact. 
Hamilton's argument exploits the fact that the Gauss map of such a hypersurface is quasiconformal.

Inspired by Hamilton's theorem, Ni and Wu \cite{NiWuPinched} (see also \cite{ChZh}) proved an analogous \emph{intrinsic} pinching theorem: any smooth, complete Riemannian manifold $M^n$, $n\ge 3$, with bounded, non-negative curvature operator $\Rm$ which is \emph{pointwise pinched}, i.e.,
\[
\rho_1\ge \alpha\rho_N\;\;\text{for some}\;\;\alpha>0,
\]
where $\rho_1\le \dots\le\rho_N$ are the eigenvalues of $\Rm$, is either flat or compact. This intrinsic result is proved using Ricci flow. The main tools in the argument are the improvement-of-pinching theorem of B\"ohm and Wilking \cite{BohmWilking} and a non-existence result for pinched Ricci solitons.

It has long been believed that Hamilton's theorem can be proved using extrinsic geometric flows such as the mean curvature flow. The idea, which combines many classical ideas of Hamilton, is as follows: suppose, to the contrary, that there exists a non-flat pinched proper hypersurface which is \emph{not} compact. Evolve it by curvature. If the evolving hypersurface becomes singular in finite time, then, due to the uniform pinching, we can blow-up \emph{\`a la} Huisken \cite{Hu84} to obtain a shrinking sphere solution, violating the noncompactness. Else, the flow exists for all time. But then we can blow-up at time infinity \emph{\`a la} Hamilton \cite{HamiltonSingularities} to obtain a non-flat, pinched (translating or expanding) soliton solution. But these may be ruled out directly.

This argument is quite simple and elegant in concept but will require some deep analytic facts about mean curvature flow to make rigorous. Unfortunately, as for the result of Ni and Wu, we require the additional hypothesis of bounded curvature (Hamilton's argument requires no such hypothesis). Removing this hypothesis in the analysis of noncompact solutions to mean curvature flow is a deep issue of independent interest.\footnote{Compare the very recent work of Deruelle--Schulze--Simon \cite{DSSRiccipinched}, Lott \cite{LottRiccipinched}, and Lee--Topping \cite{LTRiccipinched,LTPIC1} on Ricci flow and Daskalopoulos--S\'aez \cite{DaskSaez} and the authors \cite{BLLnoncollapsing} on mean curvature flow.} On the other hand, we only require weak local convexity and our proof appears to be generalizable to other settings. 

The key novel ingredient we will need is the following localization of Huisken's umbilic estimate \cite[5.1 Theorem]{Hu84}, which is an immediate corollary of the local pinching estimate recently proved in \cite{LocalPinching}.

\begin{prop}[Local umbilic estimate]\label{prop:local pinching}
Every mean curvature flow in $\R^{n+1}$ which is properly defined and $\alpha$-pinched in $B_{2L}\times [0,T)$ satisfies
\ba\label{eq:m convexity}
\vert\mathring A\vert
\le \varepsilon H+C_\varepsilon \Theta\;\;\text{in}\;\; B_{L/2}\times[0,T)\,,
\ea
where $\mathring A$ is the trace-free part of $A$, $\Theta\doteqdot \sup_{B_{2L}\times\{0\}\cup B_{2L}\setminus B_{L}\times (0,T)}H$, and $C_\varepsilon=C(n,\alpha,\varepsilon)$.
\end{prop}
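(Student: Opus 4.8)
I would derive \eqref{eq:m convexity} directly from the local pinching estimate of \cite{LocalPinching}. Applied to a properly defined, $\alpha$-pinched mean curvature flow in $B_{2L}\times[0,T)$, that result furnishes, for each $\sigma>0$, a constant $C=C(n,\alpha,\sigma)$ such that
\[
\vert\mathring A\vert^{2}\le\sigma\,H^{2}+C\,\Theta^{2}\quad\text{in}\;\;B_{L/2}\times[0,T)\,,
\]
where $\Theta$ is the supremum of $H$ over the initial slice together with the outer spatial annulus, exactly as in the statement. (Should \cite{LocalPinching} instead be quoted in the form involving the curvature spread $\kappa_{n}-\kappa_{1}$, the same conclusion follows from the elementary comparison $\tfrac1{\sqrt2}(\kappa_{n}-\kappa_{1})\le\vert\mathring A\vert\le\sqrt{\tfrac{n-1}{2}}\,(\kappa_{n}-\kappa_{1})$, valid for every symmetric endomorphism; and if it already appears in the form \eqref{eq:m convexity}, nothing further is needed.)

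Granting the displayed estimate, the deduction is brief. First I would take square roots: from $\sqrt{a+b}\le\sqrt a+\sqrt b$ for $a,b\ge0$ one gets $\vert\mathring A\vert\le\sqrt\sigma\,H+\sqrt C\,\Theta$ on $B_{L/2}\times[0,T)$, and then relabelling $\varepsilon\doteqdot\sqrt\sigma$ and $C_{\varepsilon}\doteqdot\sqrt{C(n,\alpha,\varepsilon^{2})}$ produces exactly \eqref{eq:m convexity}. Next I would attend to two bookkeeping points. Since $\vert\mathring A\vert$, $H$ and $\Theta$ all scale like inverse length and both the flow equation and the pinching condition are invariant under the parabolic rescaling $(x,t)\mapsto(\lambda x,\lambda^{2}t)$, the estimate \eqref{eq:m convexity} is scale invariant --- no power of $L$ appears --- so one may normalize $L=1$ when invoking \cite{LocalPinching}. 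Moreover, the nested balls $B_{2L}\supset B_{L}\supset B_{L/2}$, together with the precise set $B_{2L}\times\{0\}\cup(B_{2L}\setminus B_{L})\times(0,T)$ defining $\Theta$, are the configuration for which the local pinching estimate is established, so its boundary term transfers verbatim (failing which, a harmless covering argument adjusts the radii by a fixed dimensional factor).

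I do not anticipate a real obstacle here: the analytic substance sits in \cite{LocalPinching}, where one propagates the pinching inequality along the flow and runs a localized $L^{p}$/Stampacchia iteration on a cutoff version of Huisken's quantity $(\vert A\vert^{2}-\tfrac1n H^{2})/H^{2-\sigma}$ --- the device that upgrades pointwise pinching to the near-umbilicity \eqref{eq:m convexity} with a controlled boundary error, building on \cite[5.1 Theorem]{Hu84}. The one step in which I would be careful is matching hypotheses: ``$\alpha$-pinched in $B_{2L}\times[0,T)$'' is imposed on the whole cylinder and hence is at least as strong as any parabolic-boundary pinching assumption that \cite{LocalPinching} might use, so the cited estimate applies without modification.
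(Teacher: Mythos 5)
Your proposal treats the proposition as an essentially verbatim citation of \cite{LocalPinching}, with the only work being cosmetic (square roots, rescaling, relabelling $\varepsilon$). That misses the one genuine thing the paper's proof does, and it is not just bookkeeping: \cite[Theorem 1]{LocalPinching} carries an \emph{integral} hypothesis --- a uniform-in-time bound on $\int_{M_t\cap B_{2L}} H^n\,d\mu$ --- in addition to the pointwise pinching on the parabolic boundary. You never mention it. The paper's proof is precisely the verification that this hypothesis is automatically satisfied in the $m=0$, $\alpha$-pinched setting: pinching gives $H^n\le (n/\alpha)^n K$ (with $K$ the Gauss curvature), and the area formula applied to the Gauss map then yields
\[
\int_{M_t\cap B_{2L}}H^n\,d\mu\le \left(\tfrac{n}{\alpha}\right)^n\int_{M_t\cap B_{2L}}K\,d\mu\le \left(\tfrac{n}{\alpha}\right)^n\operatorname{area}(S^n)
\]
for every $t\in[0,T)$. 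Without this step you have not actually shown that \cite{LocalPinching} applies.

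Your other remarks are fine but secondary. The scale invariance, the square-root manipulation, and the equivalence of $\vert\mathring A\vert$ with $\kappa_n-\kappa_1$ are all correct and indeed superfluous if \cite{LocalPinching} is quoted in the form matching \eqref{eq:m convexity}. The ``Stampacchia iteration on $(\vert A\vert^2-\tfrac1n H^2)/H^{2-\sigma}$'' you describe is the internal mechanism of \cite{LocalPinching}, not something to be reproduced here. And the ``one step in which I would be careful'' that you single out --- whether whole-cylinder pinching implies parabolic-boundary pinching --- is trivially true and not the point of caution; the integral hypothesis is. Insert the display above (and the one-line explanation of why $H^n\lesssim K$ and why $\int K$ is controlled by the Gauss map) and the proof is complete.
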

\begin{proof}
The claim follows from the $m=0$ case of \cite[Theorem 1]{LocalPinching} since the integral hypothesis is in this case superfluous. Indeed, by the pinching hypothesis and the area formula,
\[
\int_{M_t\cap B_{2L}}H^n\,d\mu\le \left(\frac{n}{\alpha}\right)^n\int_{M_t\cap B_{2L}}K\,d\mu\le \left(\frac{n}{\alpha}\right)^n\operatorname{area}(S^n)
\]
for each $t\in[0,T)$, where $K$ is the Gauss curvature.
\end{proof}

We also require a suitable existence result for a mean curvature flow out of the boundary of a convex body. 
This is a straightforward consequence of the following Chou--Ecker--Huisken-type \cite{Ts85,EckerHuisken91} 
estimate for radial graphs (see, for example, \cite[Corollary 2.2]{LynchUniqueness} for a proof).

\begin{lem}\label{lem:global in time est}
There exists $C=C(n)$ with the following property. Let $\{\pd\Omega_t\}_{t \in [0,T)}$ be a convex solution to mean curvature flow in $\R^{n+1}$. If $B_r(p)\subset\Omega_t$ and $\sup_{B_{2Lr}}H(\cdot,0)\le\Theta r^{-1}$, where $\Theta\ge 1$ and $L>1$, then
\begin{equation*}
\sup_{X\in B_{Lr}(p)\cap \pd\Omega_t}\left(1-\frac{\vert X-p\vert^2}{L^2r^2}\right)H(X,t) \leq CL^3\Theta r^{-1}\,.
\end{equation*}
\end{lem}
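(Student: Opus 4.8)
The plan is to derive Lemma~\ref{lem:global in time est} from the interior estimate by a localization argument, carefully keeping track of the curvature bound at the initial time. First I would recall the standard Chou--Ecker--Huisken-type quantity: for a convex solution with $B_r(p)\subset\Omega_t$ for all $t$, one works with the support-function-type barrier $u(X,t)=\langle X-p,\nu\rangle$, which by convexity satisfies $u\ge 0$ (in fact $u\ge r$ wherever it makes sense to compare), together with the cutoff $\varphi=\big(1-|X-p|^2/(L^2r^2)\big)_+$ on the extrinsic ball $B_{Lr}(p)$. The function $\varphi H/u$ (or a small variant thereof) satisfies a parabolic inequality whose reaction terms can be absorbed using convexity and the bound $u\ge r$; the gradient terms coming from $\varphi$ are controlled because $|\cd\varphi|^2/\varphi$ and $|\cd^2\varphi|$ are bounded by $C(n)L^{-2}r^{-2}$ on the support of $\varphi$. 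This is exactly the computation behind \cite[Corollary 2.2]{LynchUniqueness}, so I would cite it rather than reproduce it.

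The new point relative to the purely interior statement is that we want the estimate up to and including $t=0$, at the cost of letting the constant depend on the initial curvature bound $\Theta$. Here I would run the maximum principle for $\varphi H/u$ on the region $\big(B_{Lr}(p)\cap\pd\Omega_t\big)\times[0,T)$. Since $\varphi$ vanishes on the lateral boundary $\pd B_{Lr}(p)$, there is no lateral boundary contribution, so the supremum is attained either at an interior point, where the reaction--gradient analysis of the previous paragraph gives the bound $\varphi H/u\le C(n)L^{3}r^{-1}$ up to rescaling, or at the initial time $t=0$. At $t=0$ we simply use the hypothesis $\sup_{B_{2Lr}(p)}H(\cdot,0)\le\Theta r^{-1}$ together with $\varphi\le 1$ and $u\ge r$ to get $\varphi H/u\le \Theta r^{-2}$ there. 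Combining the two cases and multiplying through by $u\le |X-p|\le Lr$ on $B_{Lr}(p)$ yields
\[
\varphi H\le u\cdot\frac{\varphi H}{u}\le Lr\cdot C(n)(1+\Theta)L^{3}r^{-2}\le C(n)L^{3}\Theta r^{-1}\,,
\]
using $\Theta\ge 1$ and $L>1$ to collapse the constants, which is the claimed inequality.

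A couple of technical points deserve care. First, to apply the interior argument one needs $B_r(p)\subset\Omega_t$ for \emph{all} $t\in[0,T)$, which is precisely what is assumed; convexity then guarantees $\langle X-p,\nu\rangle\ge r$ for $X\in\pd\Omega_t$, so dividing by $u$ is harmless and the quotient is genuinely bounded below away from zero. Second, one should make sure the cutoff is compatible with the hypothesis region: $\varphi$ is supported in $B_{Lr}(p)$ but the reaction terms in the evolution of $\varphi H/u$ only see points of $\pd\Omega_t$ inside that ball, so the initial bound on $B_{2Lr}(p)$ is more than enough (the larger ball is convenient only if one wants a little room for the Ecker--Huisken gradient estimate). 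I expect the main obstacle to be bookkeeping rather than conceptual: one must verify that every constant produced by the Cauchy--Schwarz and Young's-inequality absorptions is controlled by a \emph{universal} power of $L$ (here $L^{3}$) and depends on nothing but $n$, and that the $\Theta$-dependence enters \emph{only} through the initial datum and not through the interior reaction terms. Since the convex setting makes all the soliton-type reaction terms have a favorable sign, this is routine, and the cleanest exposition is to quote \cite[Corollary 2.2]{LynchUniqueness} for the interior bound and only spell out the initial-time boundary case.
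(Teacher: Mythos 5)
Your approach is essentially the same as the paper's: the paper itself gives no proof beyond a pointer to \cite[Corollary 2.2]{LynchUniqueness} for the Chou--Ecker--Huisken-type interior computation, and your proposal amounts to citing that same reference and spelling out how the $\Theta$-dependence enters through the initial timeslice in the maximum principle. Two small points of bookkeeping deserve a second look. First, the intermediate bound you wrote for the interior case, $\varphi H/u\le C(n)L^3 r^{-1}$, is dimensionally inconsistent ($\varphi H/u$ scales like length$^{-2}$) and, as written, makes your final chain $Lr\cdot C(n)(1+\Theta)L^3 r^{-2}$ come out to $C L^4\Theta r^{-1}$ rather than the claimed $C L^3\Theta r^{-1}$; the consistent version has interior bound $C(n)L^2 r^{-2}$, so that after multiplying by $u\le Lr$ one lands on $C(n)L^3\Theta r^{-1}$ as required. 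Second, the hypothesis ``$B_r(p)\subset\Omega_t$'' is stated for the single time $t$ appearing in the conclusion, not for all $t\in[0,T)$; what you actually need, namely $B_r(p)\subset\Omega_s$ for every $s\in[0,t]$, follows because $\Omega_s$ is nonincreasing under mean curvature flow, so this is harmless but worth saying explicitly rather than treating it as part of the hypothesis.
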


\begin{prop}[Existence]\label{prop:existence}
Let $\Omega\subset\R^{n+1}$ be an (unbounded) convex body with smooth boundary $\pd\Omega$ satisfying $\sup_{\pd\Omega}\vert A\vert<\infty$. There exist $\delta>0$ and a family of (unbounded) convex bodies 
whose boundaries $\{\pd\Omega_t\}_{t\in(0,\delta]}$ are smooth and evolve by mean curvature, converge locally uniformly to $\Omega_0\doteqdot\Omega$ as $t\to 0$, and satisfy $\sup_{t\in [0,\delta]}\sup_{\pd\Omega_t}\vert A\vert<\infty$.
\end{prop}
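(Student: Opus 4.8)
The plan is to construct the flow by exhausting the unbounded convex body $\Omega$ by bounded convex bodies, running the (smooth, compact) mean curvature flow for each, and extracting a limit using the curvature estimate of Lemma~\ref{lem:global in time est}. Concretely, fix a basepoint $p$ in the interior of $\Omega$ and let $\Omega^{(j)}\doteqdot \Omega\cap \overline{B_j(p)}$ (or a smoothing thereof with smooth boundary) for $j\in\N$; these are bounded convex bodies, and on any fixed ball $B_R(p)$ we have $\partial\Omega^{(j)}\cap B_R(p)=\partial\Omega\cap B_R(p)$ for $j$ large. Since $\sup_{\pd\Omega}\vert A\vert\le \Lambda<\infty$, a barrier argument (comparison with large spheres of radius $\sim\Lambda^{-1}$ rolling inside $\Omega^{(j)}$, which exist near $\partial\Omega$ by the curvature bound) shows that the classical compact flows $\{\pd\Omega^{(j)}_t\}_{t\in[0,\delta_j)}$ exist on a time interval $[0,\delta]$ with $\delta>0$ independent of $j$, and that each encloses a ball $B_r(p)$ of fixed radius $r>0$ on $[0,\delta]$.

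Next I would apply Lemma~\ref{lem:global in time est} to each $\pd\Omega^{(j)}_t$: given any $R>1$, for $j$ large enough that $\sup_{B_{4R}(p)}H(\cdot,0)\le \Theta R^{-1}$ for a fixed $\Theta=\Theta(R,\Lambda)$ (using the initial bound $\vert A\vert\le\Lambda$ on $\partial\Omega$), the lemma gives $H\le C(n)\,2^3\,\Theta R^{-1}$ on $B_R(p)\cap\pd\Omega^{(j)}_t$ for all $t\in[0,\delta]$, uniformly in $j$. Thus the flows have locally uniformly bounded mean curvature, hence (by convexity, $\vert A\vert\le H$) locally uniformly bounded second fundamental form, away from $t=0$ and in fact — because the initial bound propagates — up to $t=0$ as well after possibly shrinking $\delta$. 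Standard interior estimates for mean curvature flow (Ecker--Huisken) then bound all higher derivatives of $A$ locally uniformly on $[\,\eta,\delta]\times B_R(p)$ for each $\eta>0$, and one can upgrade to estimates on $[0,\delta]$ using the smooth initial data via Schauder-type parabolic estimates. A diagonal subsequence argument produces a smooth limit flow $\{\pd\Omega_t\}_{t\in(0,\delta]}$; convexity is preserved in the limit, the bodies $\Omega_t$ are unbounded convex bodies, and $\sup_{t\in[0,\delta]}\sup_{\pd\Omega_t}\vert A\vert<\infty$ follows from the uniform curvature bounds.

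It remains to check the initial condition $\Omega_t\to\Omega$ locally uniformly in the smooth topology as $t\to0$. For Hausdorff convergence this is immediate from the avoidance principle (the flow stays between inner and outer spheres pinned at a point of $\partial\Omega$, or more simply from the uniform bound $\vert\partial_t X\vert=H\le C$ on compact sets giving $\operatorname{dist}_{H}(\partial\Omega_t\cap B_R,\partial\Omega\cap B_R)\le Ct$). For smooth convergence near a fixed boundary point, write $\partial\Omega$ locally as a graph over its tangent hyperplane; the bound $\vert A\vert\le\Lambda$ makes this a uniformly controlled graph, and the flows $\partial\Omega^{(j)}_t$ remain uniformly controlled graphs over the same hyperplane on a short time interval, with graph functions satisfying the quasilinear graphical mean curvature equation. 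Interior Schauder estimates for this equation, together with the convergence of initial data, give convergence of the graph functions in $C^\infty_{\mathrm{loc}}$ as $t\to0$.

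The main obstacle I expect is controlling the flow \emph{uniformly up to the initial time} $t=0$, rather than merely on $[\eta,\delta]$: Lemma~\ref{lem:global in time est} and the Ecker--Huisken interior estimates are cleanest away from $t=0$, so one must carefully exploit the hypothesis $\sup_{\pd\Omega}\vert A\vert<\infty$ — via barrier/foliation arguments to get a uniform short existence time $\delta$, and via the graphical reformulation above to propagate the initial curvature bound — to conclude $\sup_{[0,\delta]}\sup_{\pd\Omega_t}\vert A\vert<\infty$ and smooth initial convergence. The unboundedness of $\Omega$ is handled entirely by the localization already built into Lemma~\ref{lem:global in time est} and the exhaustion, so it causes no essential difficulty beyond bookkeeping.
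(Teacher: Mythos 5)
Your proposal is correct and follows essentially the same strategy as the paper: exhaust the unbounded convex body by bounded convex approximants, run the compact flow on each with a short existence time bounded below via the inradius and the avoidance principle, invoke Lemma~\ref{lem:global in time est} to obtain curvature bounds that are uniform in the approximating parameter and valid down to $t=0$ (since the lemma only needs $\sup H(\cdot,0)<\infty$, which you have from $\vert A\vert\le\Lambda$), upgrade to higher-order bounds via Ecker--Huisken, and extract a smooth subsequential limit. The only difference is cosmetic: the paper builds the bounded approximants by reflecting $\Omega$ across horizontal hyperplanes and then mollifying the radial graph function, working in the radial-graph parabolic existence framework, whereas you cut off by balls and smooth the resulting corners. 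Both constructions serve the same role (a bounded convex approximant agreeing with $\Omega$ on any fixed compact set and with inradius bounded uniformly from below), and both require a mollification step, so the content is the same.
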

\begin{proof}
We may assume that $\Omega$ contains no lines --- else it splits as a product of an affine subspace with a lower dimensional convex body which contains no lines; the solution we seek is then obtained as a product of an affine subspace with the lower dimensional solution we shall construct. In that case, $\Omega$ is either bounded or, up to a rotation, the graph of a function $u:D\to\R$ over some convex domain $D\subset \R^n\times\{0\}$ with compact sublevel sets $\{u\le h\}$. When $\Omega$ is unbounded, we consider for each height $h>0$ the bounded convex body $\Omega^h$ defined by intersecting $\Omega$ with its reflection about the plane $\{x_{n+1}=h\}$ (if $\Omega$ is bounded, just take $\Omega^h=\Omega$ in what follows). Let $B^h$ be the largest ball contained in $\Omega^h$ 
and denote by $r_h:\pd B^n\to\R$ the radial graph height of $\pd\Omega^h$. We now mollify $\Omega^h$ to obtain, for any sufficiently small $\varepsilon>0$, a smooth convex body $\Omega^{h,\varepsilon}$ with corresponding smooth radial graph function $r_{h,\varepsilon}:\pd B^n\to\R$ (for example, we could mollify the radial graph functions $r_h$ using, say, the heat kernel on $S^n$). 

For $\varepsilon$ sufficiently small, we can evolve $r_{h,\varepsilon}$ smoothly by radial graphical mean curvature flow using parabolic existence theory. Since, by the avoidance principle, the time of existence of the approximating solutions is bounded from below by the square of their inradius (which is bounded uniformly from below as $\varepsilon\to 0$ and $h\to\infty$) Lemma \ref{lem:global in time est} and the higher order estimates of Ecker and Huisken \cite{EckerHuisken91}  yield the claims upon taking $\varepsilon\to 0$ and then $h\to\infty$.
\end{proof}

In order to exploit Proposition \ref{prop:local pinching}, we will need two ingredients. First, we need to preserve the initial pinching condition. 

\begin{prop}[Pinching is preserved]\label{prop:pinching preserved}
Let $\{M_t\}_{t\in[0,\delta]}$ be a family of convex, locally uniformly convex hypersurfaces evolving by mean curvature flow with $\sup_{t\in[0,\delta]}\sup_{M_t}\vert A\vert<\infty$. 
If $\kappa_1\ge \alpha H$ on $M_0$, then
\[
\kappa_1\ge \alpha H\;\;\text{on}\;\; M_t\;\;\text{for all}\;\;t\in[0,\delta]\,.
\]
\end{prop}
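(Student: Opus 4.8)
The plan is to apply the maximum principle for symmetric $2$-tensors, in the spirit of Huisken \cite{Hu84}, to the tensor $M\doteqdot A-\alpha Hg$ along the flow; its positive semi-definiteness on $M_t$ is exactly the pinching assertion $\kappa_1\ge\alpha H$. First I would record the evolution equations under mean curvature flow,
\[
\pd_tg_{ij}=-2Hh_{ij},\qquad \pd_th_{ij}=\Delta h_{ij}-2H\,h_i{}^kh_{kj}+|A|^2h_{ij},\qquad \pd_tH=\Delta H+|A|^2H,
\]
and, using that $g$ is parallel (so $\Delta(Hg_{ij})=(\Delta H)g_{ij}$), compute
\[
\pd_tM_{ij}=\Delta M_{ij}+|A|^2M_{ij}+Q_{ij},\qquad Q_{ij}\doteqdot-2H\,h_i{}^kh_{kj}+2\alpha H^2h_{ij}.
\]

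Next I would verify the null-eigenvector condition for the zeroth-order term $|A|^2M_{ij}+Q_{ij}$. If $M_{ij}v^j=0$ at a point, then $Av=\alpha Hv$, so $(|A|^2M_{ij})v^iv^j=0$ and
\[
Q_{ij}v^iv^j=-2H(\alpha H)^2|v|^2+2\alpha H^2(\alpha H)|v|^2=0\ge0.
\]
Hence the reaction term is nonnegative on null eigenvectors of $M$, so in the compact case Hamilton's tensor maximum principle preserves $M\ge0$, i.e.\ $\kappa_1\ge\alpha H$.

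The only genuine issue is that the $M_t$ need not be compact, so the maximum principle has to be run on a complete noncompact evolving manifold --- and this is precisely where the hypothesis $\Lambda\doteqdot\sup_{t\in[0,\delta]}\sup_{M_t}|A|<\infty$ is used. It gives bounded geometry: by the Gauss equation the intrinsic curvature of $M_t$ is $O(\Lambda^2)$, the interior estimates of Ecker--Huisken \cite{EckerHuisken91} bound every $\cd^mA$ uniformly on $[\eta,\delta]$ for each $\eta>0$ (the smooth initial datum taking care of $t$ near $0$), and all coefficients in the evolution of $M$ are bounded. One then proceeds in the standard way: for fixed $\varepsilon>0$ consider $M^\varepsilon\doteqdot M+\varepsilon(1+Kt)g$ with $K=K(n,\Lambda)$ large enough that the reaction term of $M^\varepsilon$ is strictly positive on its null eigenvectors, multiply by a cutoff function built from the intrinsic distance on $M_t$ (whose Laplacian is controlled by the curvature bound) to localize the first time and place where an eigenvalue of $M^\varepsilon$ could vanish, derive a contradiction, and let $\varepsilon\to0$ --- or simply invoke an off-the-shelf tensor maximum principle for manifolds of bounded geometry. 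I expect the tensorial computation and the null-eigenvector algebra to be routine; the main obstacle, modest as it is, is setting up this noncompact maximum principle correctly.
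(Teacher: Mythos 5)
Your decomposition $S = A - \alpha H g$ and the null-eigenvector computation are exactly what the paper uses (the paper phrases the evolution via the Andrews--Baker connection $\cd_t$, under which $(\cd_t-\Delta)S = |A|^2 S$ with no residual $Q$-term, but that is a cosmetic difference from your direct $\pd_t$-calculation). The real divergence is in how the two proofs handle noncompactness, and here the paper's route is markedly slicker than the one you sketch.

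The paper does \emph{not} run a cutoff-function or bounded-geometry maximum principle. Instead, it exploits the convexity hypothesis to manufacture an explicit barrier: since a convex hypersurface containing no lines has a direction $e$ with $\beta \doteqdot \inf_{X\in M_0}\inner{(X-p)/|X-p|}{e} > 0$, the function $\psi(X,t)=\inner{X-p}{e}\,\mathrm{e}^{(C+1)t}$ (with $C=\sup_{[0,\delta]}\sup_{M_t}|A|^2$) satisfies $\psi\ge\beta|X-p|\to\infty$ at spatial infinity \emph{and} the clean identity $(\pd_t-\Delta)\psi=(C+1)\psi$, because $\Delta\inner{X-p}{e}=-H\inner{\nu}{e}=\pd_t\inner{X-p}{e}$ under MCF. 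One then perturbs by $S^\varepsilon = S+\varepsilon\psi\,g$: the unboundedness of $\psi$ forces the first touching point to occur at finite distance, parallel transport of the null vector gives the usual pointwise inequality, and the exponential factor swallows the $|A|^2$ reaction term. No intrinsic distance function, no cutoff, no Omori--Yau appeal. This buys a much shorter and more elementary argument than the generic bounded-geometry machinery you propose, at the (irrelevant here) cost of requiring convexity.

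Two remarks on your version. First, the barrier $\varepsilon(1+Kt)g$ has the wrong growth: on a null eigenvector of $M^\varepsilon$ the reaction term works out to $\varepsilon\bigl(K-(1+Kt)|A|^2\bigr)$, which is positive only if $K(1-t|A|^2)>|A|^2$, i.e.\ only when $\delta\Lambda^2<1$; you should use $\varepsilon\,\mathrm{e}^{Kt}g$ (or restart the argument on short subintervals), which gives $\varepsilon\,\mathrm{e}^{Kt}(K-|A|^2)>0$ for $K>\Lambda^2$. Second, the cutoff built from the intrinsic distance $d_t$ on $M_t$ is more delicate than you indicate: $d_t$ is time-dependent along the flow, its time derivative involves the metric evolution, and one needs both the Laplacian comparison and a control on $\pd_t d_t$ to make the localized tensor maximum principle close --- this is precisely the technical work that the paper's convexity trick avoids entirely. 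The proposal is not wrong, but the step you call ``modest'' is the bulk of the proof by your route, and the paper sidesteps it.
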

\begin{proof}
Since $M_0$ does not contain any lines, we can find $p\in \R^{n+1}$ and $e\in S^n$ so that $\beta\doteqdot \inf_{X\in M_0}\inner{\frac{X-p}{\vert{X-p\vert}}}{e}>0$. If we define
\[
\psi(X)\doteqdot \inner{X-p}{e}\mathrm{e}^{(C+1)t}\,,
\]
where $C\doteqdot\sup_{t\in[0,\delta]}\sup_{M_t}\vert A\vert^2$, then
\[
\psi\ge \beta\vert X-p\vert\;\;\text{and}\;\;(\pd_t-\Delta)\psi=(C+1)\psi\,.
\]

Fix $\varepsilon>0$ and set $S\doteqdot A-\alpha Hg$. We claim that the tensor
\[
S^{\varepsilon}\doteqdot S+\varepsilon\psi g
\]
remains non-negative definite on $[0,\delta]$. 
%
Suppose that this is not the case. Then, since $S$ is positive definite on $M_0$ and $\psi\to\infty$ as $\vert X\vert\to\infty$, there must exist $t_0\in(0,\delta]$, $X_0\in M^n_{t_0}$, and $V_0\in T_{X_0}M^n_{t_0}$ such that $S^{\varepsilon}_{(X,t)}>0$ for each $X\in M^n_t$, $t\in [0,t_0)$, but $S^{\varepsilon}_{(X_0,t_0)}(V_0,V_0)=0$. Extend $V_0$ locally in space by solving 
\[
\cd_{\gamma'} V\equiv 0
\]
along radial $g_{t_0}$-geodesics $\gamma$ emanating from $X_0$ and then extend the resulting local vector field locally in the time direction by solving
\[
\cd_tV\equiv 0\,,
\]
where $\cd_t$ is the time-dependent connection of Andrews and Baker \cite{AnBa10}. We find that $\cd V(X_0,t_0) = 0$, $\cd_t V(X_0,t_0) = 0$, and $\Delta V (X_0,t_0) = 0$. 

Now set $s_{\varepsilon}(X,t)\doteqdot S^{\varepsilon}_{(X,t)}(V_{(X,t)},V_{(X,t)})$ for $(X,t)$ near $(X_0,t_0)$. 
We now find at $(X_0,t_0)$ that
\bann
0\geq(\pd_t-\Delta)s_{\varepsilon}={}&(\cd_t-\Delta)S^{\varepsilon}(V,V)\\
\geq{}&\vert A\vert^2S(V,V)+\varepsilon(C +1)\psi\\
={}&-\varepsilon \psi\vert A\vert^2+\varepsilon(C +1)\psi\\
\geq{}&\varepsilon\psi\\
>{}&0\,,
\eann
which is absurd. So $S^{\varepsilon}$ indeed remains positive definite in $[0,\delta]$. Now take $\varepsilon\to 0$.
\end{proof}

Second, we need a bound for the curvature at infinity which is uniform in time. 

\begin{prop}[Curvature bound at infinity]\label{prop:curvature bound at infinity}
Let $\{\pd\Omega_t\}_{t\in[-\frac{1}{2n}R^2,0]}$ be a family of convex boundaries evolving by mean curvature. Suppose that $0 \in \Omega_0$. Given $\varepsilon>0$ and $\delta>0$, there exists $L<\infty$ such that, given any $X \in M_0 \setminus B_{LR}(0)$,
\[H(X,0) \geq \delta R^{-1}\;\;\implies\;\;\inf_{B_{\frac{\delta}{2H(X,0)}}(X)\times(-\frac{1}{2n}\frac{\delta^2}{4H^2(X,0)},0]}\frac{\kappa_1}{H}\leq \varepsilon\,.\]
\end{prop}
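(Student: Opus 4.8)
The plan is to argue by contradiction and to play the pinching hypothesis off against the universal bound $\int_{M_0}K\,d\mu\le\operatorname{area}(S^n)$ for the Gauss--Kronecker curvature $K$ (the Gauss map of a convex boundary is injective on $\{K>0\}$). Suppose the claim fails for some $\varepsilon,\delta>0$. Then for each $j\in\N$ we may choose $X_j\in M_0\setminus B_{jR}(0)$ with $H_j\doteqdot H(X_j,0)\ge\delta R^{-1}$ and $\kappa_1>\varepsilon H$ throughout $B_{\delta/(2H_j)}(X_j)\times(-\tfrac{\delta^2}{8nH_j^2},0]$. Since $|X_j|\to\infty$ and $\delta/(2H_j)\le R/2$, passing to a subsequence we may assume the balls $B_{\delta/(2H_j)}(X_j)$ are pairwise disjoint.

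Parabolically rescale each flow about $(X_j,0)$ by the factor $H_j$. As $RH_j\ge\delta$, the rescaled flows $\{\tilde M^j_s\}$ are defined, convex, and $\varepsilon$-pinched on the fixed parabolic region $B_{\delta/2}(0)\times(-\tfrac{\delta^2}{8n},0]$, with $\tilde H^j(0,0)=1$. The heart of the matter is the scale-invariant estimate
\[
\int_{M_0\cap B_{\delta/(2H_j)}(X_j)}H^n\,d\mu \;=\; \int_{\tilde M^j_0\cap B_{\delta/2}(0)}\tilde H^n\,d\tilde\mu \;\ge\; c_0
\]
for a constant $c_0=c_0(n,\delta)>0$ independent of $j$. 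This is where the \emph{temporal} extent of the neighbourhood in the statement enters: if the integral dropped below the universal $\varepsilon$-regularity threshold $\eta_0=\eta_0(n)$, then $\varepsilon$-regularity on the definite parabolic region $B_{\delta/2}(0)\times(-\tfrac{\delta^2}{8n},0]$, combined with the interior derivative estimates of Ecker--Huisken \cite{EckerHuisken91} and the curvature estimates underlying \cite{LocalPinching}, would yield uniform bounds on $|\tilde A^j|$ and $|\cd\tilde A^j|$ near $(0,0)$; since $\tilde H^j(0,0)=1$, the gradient bound forces $\tilde H^j\ge\tfrac12$ on $\tilde M^j_0\cap B_{c_1}(0)$ for some $c_1=c_1(n,\delta)>0$, whence the integral is at least $2^{-n}\operatorname{area}(\tilde M^j_0\cap B_{c_1}(0))\ge c_0$.

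Granting this, the pinching gives $K\ge(\varepsilon H)^n$ on each $M_0\cap B_{\delta/(2H_j)}(X_j)$, so disjointness yields
\[
\operatorname{area}(S^n)\;\ge\;\int_{M_0}K\,d\mu\;\ge\;\varepsilon^n\sum_j\int_{M_0\cap B_{\delta/(2H_j)}(X_j)}H^n\,d\mu\;\ge\;\varepsilon^n c_0\sum_j 1\;=\;\infty,
\]
a contradiction, completing the proof.

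I expect the lower bound $c_0$ to be the main obstacle: it encodes that a pinched convex flow cannot have both unit mean curvature at a point and arbitrarily small total curvature at that scale, and its justification rests on an $\varepsilon$-regularity/interior-estimate input for the rescaled flows. Everything else --- the disjointness of the balls, the inequality $H^n\le\varepsilon^{-n}K$, and $\int_{M_0}K\,d\mu\le\operatorname{area}(S^n)$ --- is routine.
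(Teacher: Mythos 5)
Your approach is genuinely different from the paper's. The paper performs parabolic point selection (Ecker's trick) about each $X_j$ to obtain $(Y_j,s_j)$ at which the mean curvature locally doubles nowhere; by convexity $|A|\le H\le 2H(Y_j,s_j)$ on a definite parabolic region, so after rescaling by $H(Y_j,s_j)$ one has uniform curvature bounds and a \emph{smooth} blow-up limit. A triangle argument (segments from the fixed point $0\in\Omega_0$ to $Y_j$, together with their reflections across the hyperplane through $Y_j$ orthogonal to the limiting ray) shows the blow-up limit splits off a line, so $\kappa_1=0$ while $H=1$ at the spacetime origin --- contradicting $\kappa_1/H\ge\varepsilon$, which survives in the limit. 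You instead try to sum up $\int H^n$ over infinitely many disjoint balls and play this off against $\int_{M_0}K\,d\mu\le\operatorname{area}(S^n)$; this is appealing (and close in spirit to the paper's reduction of Proposition~\ref{prop:local pinching} to \cite{LocalPinching}), but the crucial lower bound $c_0$ has a real gap as you sketch it.

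The problem is the $\varepsilon$-regularity step. You rescale by $H(X_j,0)$, not by a point-selected value, so you have \emph{no} a priori curvature bound on the rescaled flows in $B_{\delta/2}\times(-\tfrac{\delta^2}{8n},0]$; the only thing you know pointwise is $\tilde H^j(0,0)=1$. To conclude $|\tilde A^j|+|\nabla\tilde A^j|\le C$ near $(0,0)$ you invoke an $\varepsilon$-regularity theorem triggered by smallness of $\int_{\tilde M^j_0\cap B_{\delta/2}}\tilde H^n$ \emph{at the final time only}. The Choi--Schoen/Ecker-type $\varepsilon$-regularity theorems for mean curvature flow require smallness of $\sup_t\int_{\tilde M^j_t\cap B_{\delta/2}}|\tilde A^j|^n$ over the whole parabolic cylinder (or smallness of a Gaussian density ratio over an earlier time slice); they do not follow from a bound at the final time slice alone. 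The pinching hypothesis together with the Gauss-map bound does control $\int_{\tilde M^j_t\cap B_{\delta/2}}\tilde H^n$ at earlier times, but only by $\varepsilon^{-n}\operatorname{area}(S^n)$, which is of order one, not below the threshold $\eta_0(n)$. So the dichotomy (``either $\int\ge\eta_0$, or $\varepsilon$-regularity kicks in'') does not close, and you cannot establish $c_0>0$ this way. Without that lower bound the summation step produces nothing. The paper's point selection is precisely the device that replaces the missing regularity input: it manufactures the curvature bound for free, at the cost of moving the basepoint slightly in spacetime (but staying inside the pinched cylinder, so the contradiction with $\kappa_1/H\ge\varepsilon$ still goes through at the blown-up basepoint). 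If you want to rescue your total-curvature count, you would still need to point-select before rescaling, and then the regions you sum over live on different time slices $s_j$, which complicates the disjointness and the application of $\int K\le\operatorname{area}(S^n)$ on a single slice.
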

\begin{proof}
It suffices to prove the claim when $R=1$. So defining $P_r(X,t)\doteqdot B_r(X)\times(t-\frac{1}{2n}r^2,t]$ suppose, contrary to the claim, that we can find $\varepsilon>0$, $\delta>0$, and a sequence of points $X_j\in M_0$ such that
\[
\vert X_j\vert\underset{j\to\infty}{\to}\infty\,,\;\; H(X_j,0)\ge \delta\,,\;\;\text{and yet}\;\;\inf_{P_{\frac{\delta}{2H(X_j,0)}(X_j,0)}}\frac{\kappa_1}{H}(X_j,0)>\varepsilon\,.
\]
Point selection 
yields a sequence of points $(Y_j,s_j)$ with the following properties:
\begin{enumerate}
\item \label{eq:pp1} $(Y_j,s_j) \in P_{\frac{\delta}{2H(X_j,0)}}(X_j,0)$ (and hence $\frac{\kappa_1}{H}(Y_j,s_j)>\varepsilon$).
\item \label{eq:pp2} $H(Y_j,s_j) \geq H(X_j,0)$.
\item \label{eq:pp3} $H \leq 2H(Y_j,s_j)$ in $P_{\frac{\delta}{4H(Y_j,s_j)}}(Y_j,s_j)$. 
\end{enumerate}
Indeed, if the choice $(Y_j,s_j)=(X_j,0)$ satisfies \eqref{eq:pp3}, then we take $(Y_j,s_j)=(X_j,0)$. If not, choose $(X_j^1,t_j^1) \in P_{\frac{\delta}{4H(X_j,0)}}(X_j,0)$ such that 
\[
H(X_j^1,t_j^1) \geq 2H(X_j,0)\,.
\]
So $(X_j^1,t_j^1)$ satisfies \eqref{eq:pp1} and \eqref{eq:pp2}. If $(X_j^1,t_j^1)$ also satisfies \eqref{eq:pp3}, choose $(Y_j,s_j)=(X_j^1,t_j^1)$; if not, choose $(X_j^2,t_j^2) \in P_{\frac{\delta}{4H(X_j^1,t_j^1)}}(X_j^1,t_j^1)$ such that
\[
H(X_j^2,t_j^2) \geq 2H(X_j^1,t_j^1)\,.
\]
Since $P_{\frac{\delta}{4H(X_j^1,t_j^1)}}(X_j^1,t_j^1)\subset P_{\frac{\delta}{4H(X_j,0)}+\frac{\delta}{8H(X_j,0)}}(X_j,0)$, this point satisfies properties \eqref{eq:pp1} and \eqref{eq:pp2}. Since $\sum_{k=1}^\infty 2^{-k}=1$ and $H$ is finite on the set $P_{\frac{\delta}{H(X_j,0)}}(X_j,0)$, continuing in this way we find, after some finite number of steps $k$, a point $(Y_j,s_j)\doteqdot (X_j^k,t_j^k)$ satisfying \eqref{eq:pp1}, \eqref{eq:pp2} and \eqref{eq:pp3}.

Now translate $(Y_j,s_j)$ to the spacetime origin and rescale by $\lambda_j\doteqdot H(Y_j,s_j)$ to obtain a sequence of rescaled flows $\{M^j_t\}_{t\in (-\lambda_j^2(s_j+\frac{1}{2n}),0]}$ defined by $M^j_t\doteqdot \lambda_j(M_{\lambda_j^{-2}t+s_j}-Y_j)$. Passing to a subsequence, the final timeslices $M^j_0$ converge locally uniformly in the Hausdorff topology to a convex hypersurface $M^\infty_0=\pd\Omega^\infty_0$. 

We claim that $M^\infty_0$ splits off a line. To see this, consider the segments $\ell_i$ joining $Y_j\in \pd\Omega_{s_j}$ to $0\in \Omega_{0}\subset \Omega_{s_j}$. Passing to a subsequence, these segments converge to a ray, $\ell$, emanating from $0$. Observe that $\ell\subset \Omega_{0}$ for all $j$. Indeed, if this were not the case, then we could find a point $p\in \ell\cap \pd\Omega_{0}$. But then, since $\ell\cap\pd\Omega_{-1/2}=\emptyset$, the tangent hyperplane to $\pd\Omega_t$ parallel to $T_p\pd\Omega_0$ must travel an infinite distance in finite time, contradicting the uniform curvature bound. Now consider the segment $\ell'_i$ obtained from $\ell_i$ by reflection across the hyperplane orthogonal to $\ell$ and through $Y_j$. Since $\Omega_{s_j}$ is convex, the triangle bounded by $\ell_i$, $\ell_i'$ and $\ell$ lies in $\overline\Omega_{s_j}$ for all $j$. The claim follows, since the angle between $\ell_i$ and $\ell_i'$ goes to $\pi$ as $i\to\infty$ and $\lambda_i\ge \delta>0$.

Since (after passing to a subsequence) the convergence is smooth in $P_{\frac{\delta}{8}}$, we find that $H=1$, $\kappa_1=0$ and $\frac{\kappa_1}{H}\ge \varepsilon$ at the spacetime origin, which is absurd.
\end{proof}

Finally, we rule out pinched expanding or translating solutions.

\begin{prop}[No pinched solitons]\label{prop:translator and expander}
There exist no locally uniformly convex pinched mean curvature flow translators or expanders.
\end{prop}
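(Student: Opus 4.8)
The plan is to rule out the translator and the expander cases separately, in each case using that local uniform convexity together with pinching forces the soliton to be, essentially, a shrinking sphere profile — which is neither a translator nor an expander. For the \emph{translator} case, I would first invoke Proposition \ref{prop:local pinching}: a pinched translator is in particular a properly defined, $\alpha$-pinched ancient (indeed eternal) mean curvature flow, so on any parabolic cylinder $B_{2L}\times[0,T)$ we get $\vert\mathring A\vert\le\varepsilon H+C_\varepsilon\Theta$. By the translator structure the flow is a time-translate of a fixed hypersurface, so $\Theta$ is controlled by $\sup_{B_{2L}\setminus B_L}H$; letting $L\to\infty$ along a sequence of points — or, cleaner, applying Proposition \ref{prop:curvature bound at infinity}-type reasoning to the eternal flow — one would like to conclude $\mathring A\equiv 0$, i.e. the translator is totally umbilic. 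A totally umbilic hypersurface of $\R^{n+1}$ is a piece of a sphere or a hyperplane, and neither of these is a (nontrivial, locally uniformly convex) translator; the hyperplane is not locally uniformly convex and the sphere is not a translator. That disposes of the translator.

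For the \emph{expander} case I would argue similarly but first extract an ancient flow: an expanding soliton $\{M_t\}_{t>0}$ with $M_t=\sqrt{t}\,M_1$ is, after reversing time or rather by self-similarity, tied to a fixed hypersurface $M_1$, and its curvature satisfies $H(\cdot,t)=t^{-1/2}H(\cdot/\sqrt t,1)$. Pinching $\kappa_1\ge\alpha H$ is scale invariant, so $M_1$ is itself $\alpha$-pinched and locally uniformly convex. Again applying the local umbilic estimate on large balls and letting the radius go to infinity (using that the self-similar structure gives uniform control of $\Theta$ on the relevant scales), I would conclude $M_1$ is totally umbilic, hence a sphere — but a sphere (shrinker) is not an expander. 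Alternatively, and perhaps more robustly, one can use the soliton equation directly: for a convex expander the support function satisfies a Monge–Amp\`ere-type equation, and pinching forces the principal curvatures to be comparable everywhere; combined with the expander identity $H=\tfrac{1}{2}\langle X,\nu\rangle$ (up to normalization) and properness one derives a contradiction with noncompactness exactly as in the sphere case.

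The main obstacle I anticipate is making rigorous the ``let $L\to\infty$'' step in the local umbilic estimate: one needs that along a sequence of basepoints escaping to infinity the quantity $\Theta=\sup_{B_{2L}\setminus B_L}H$, measured \emph{after} recentering at the basepoint, does not blow up relative to the local curvature scale. For translators this is automatic because the flow is an isometric translate of itself; for expanders it follows from self-similarity once one normalizes by $H$ at the basepoint. If there is any difficulty it will be in the expander case where the hypersurface genuinely changes scale, and one may instead prefer the direct soliton-equation argument: for a locally uniformly convex pinched expander, one shows the inradius of the convex body must grow linearly while the pinching forces near-spherical geometry on all scales, and a blow-down produces a pinched shrinker, which by Huisken's classification \cite{Hu84,Hu90} is a sphere, contradicting that the original expander was noncompact and nontrivial. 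I would present whichever of these two routes is shortest given the tools already assembled in the paper.
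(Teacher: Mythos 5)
Your primary route --- applying Proposition~\ref{prop:local pinching} over balls $B_{2L}$ and sending $L\to\infty$ --- does not work as stated, and the gap is not merely the technical normalization you flag. The local umbilic estimate gives
$\vert\mathring A\vert\le\varepsilon H+C_\varepsilon\Theta$ where
$\Theta=\sup_{B_{2L}\times\{0\}\,\cup\,(B_{2L}\setminus B_L)\times(0,T)}H$
includes the supremum of $H$ over the \emph{entire} initial time slice in $B_{2L}$, not just the outer annulus. For a nontrivial translator or expander, $\max H>0$ is attained at a finite point (this is exactly what Proposition~\ref{prop:curvature bound at infinity} gives, and it is the only thing the paper uses that proposition for), so once $B_{2L}$ contains that point, $\Theta$ is bounded \emph{below} independently of $L$. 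Since $C_\varepsilon\to\infty$ as $\varepsilon\to0$, you cannot let $\varepsilon\to0$ to deduce $\mathring A\equiv 0$; you only recover the estimate you already had. Recentering far from the tip does not help either, because then the conclusion ball $B_{L/2}$ also moves away from the tip and you learn nothing new near it. So total umbilicity cannot be extracted this way, and the remainder of your translator and expander arguments collapses.

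The paper's actual argument is structurally different and worth internalizing. Proposition~\ref{prop:curvature bound at infinity} is invoked only to ensure $H$ attains its maximum at a point $o$. One then works directly with the soliton vector field $V=e^\top$ (translator) or $V=\tfrac12 X^\top$ (expander), which satisfies the gradient-type identity $L(V)+\cd H=0$ and has $\cd V=HL$ (resp.\ $HL+\tfrac12\mathrm{I}$) positive definite. Positivity of $\cd V$ shows $o$ is the unique zero of $V$ and that $M\setminus\{o\}$ is foliated by integral curves of $V$. Combining $\vert V\vert^2=1-H^2$ (resp.\ the expander identity) with the pinching bound $A\ge\alpha Hg$ yields quantitative lower bounds on $\vert V\vert$ in terms of the geodesic distance $d$ to $o$, and the identity $-\tfrac{d}{ds}\log(H\circ\sigma)=A(V,V)/H\ge\alpha\vert V\vert^2$ then gives $H\le H(o)\,\E^{1-\alpha d}$ for translators and $H\le H(o)\,\E^{-\alpha d^2/4}$ for expanders. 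This exponential (resp.\ Gaussian) decay of $H$ forces the blow-downs $\lambda M$ to converge, as $\lambda\to0$, to a non-planar convex cone, which violates pinching. This is a soliton-specific ODE/foliation argument rather than an application of the localized umbilic estimate; the localized estimate simply does not have the right dependence on $\Theta$ to serve the purpose you assign to it. Your secondary ``Monge--Amp\`ere / blow-down to a pinched shrinker'' sketch is closer in spirit to what the paper does at the very end (blow-down to a cone), but as written it lacks the decay estimate that makes the blow-down well-behaved, which is exactly the content the gradient-field computation supplies.
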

\begin{proof}
We proceed as in \cite{BL17} and \cite{Ni05}. First, let $M^n\subset \R^{n+1}$, $n\ge 2$, be a locally uniformly convex, pinched mean curvature flow translator. Then $M$ satisfies
\[
H=-\inner{e}{\nu}
\]
for some $e\in \R^{n+1}\setminus\{0\}$. Observe that the vector field $V\doteqdot e^\top$ satisfies
\begin{equation}\label{eq:gradient field}
L(V)+\cd H=0\,,
\end{equation}
and
\begin{equation}\label{eq:gradient of gradient field translator}
\cd V=HL\,,
\end{equation}
where $L$ denotes the Weingarten map (see, for example, Lemma 13.32 in the book \cite{EGF} of Chow \emph{et al.}).

By Proposition \ref{prop:curvature bound at infinity}, $H$ attains its maximum at some point $o\in M$. Since $A>0$, \eqref{eq:gradient field} implies that $o$ is a zero of $V$. We claim that $V$ vanishes nowhere else. To see this, fix $X\in M\setminus\{o\}$ and let $\gamma:[0,d(X)]\to M$ be any minimizing unit speed geodesic joining $o$ to $X$, where $d$ denotes the intrinsic distance to $o$. Observe that
\ba\label{eq:distance bound}
\inner{V_X}{\gamma'(d(X))}={}&\int_0^{d(X)}\!\!\frac{d}{ds}\inner{V\circ\gamma}{\gamma'}ds=\int_0^{d(X)}\!\!\inner{\cd_{\gamma'}V}{\gamma'}ds\,.
\ea
This is positive (and hence $V_X\neq 0$) since, by \eqref{eq:gradient of gradient field translator}, $\cd V$ is positive definite. In fact, since $\vert V\vert^2=1-H^2$ and $A\ge \alpha Hg$ for some $\alpha>0$, we obtain (we will use this in a moment)
\begin{equation}\label{eq:translator estimate}
1\ge \vert V\vert \ge \alpha\int_0^{d}H^2\,ds=\alpha d-\alpha\int_0^{d}\vert V\vert^2\,ds\,.
\end{equation}

It follows that $M\setminus\{o\}$ is foliated by integral curves $\sigma:(0,\infty)\to M$ of $V$ with $\sigma(s)\to o$ as $s\to 0$. Let $\sigma$ be such a curve. By \eqref{eq:gradient field} and the pinching hypothesis,
\begin{equation}\label{eq:H estimate}
-\frac{d}{ds}\log(H\circ\sigma)=-\frac{\cd_{V}H}{H}=\frac{A(V,V)}{H}\ge \alpha\vert V\vert^2\,.
\end{equation}
Combining \eqref{eq:translator estimate} and \eqref{eq:H estimate} yields
\[
H\le H(o)\mathrm{e}^{1-\alpha d}\,.
\]

Since $M$ is convex and non-flat, it follows from the Ecker--Huisken interior estimates that $\lambda M$ converges as $\lambda\to 0$ locally uniformly in $C^\infty(\R^{n+1}\setminus\{0\})$ to a non-planar convex cone. But this violates pinching.

Now, let $M^n\subset \R^{n+1}$, $n\ge 2$, be a locally uniformly convex, pinched mean curvature flow expander. Then $M$ satisfies
\[
H=-\frac{1}{2}\inner{X}{\nu}\,.
\]
Observe that the vector field $V\doteqdot \tfrac{1}{2}X^\top$ satisfies \eqref{eq:gradient field} and
\begin{equation}\label{eq:gradient of gradient field expander}
\cd V=HL+\tfrac{1}{2}I\,.
\end{equation}
(See, for example, \cite[Lemma 10.14]{EGF}).

By Proposition \ref{prop:curvature bound at infinity}, $H$ attains its maximum at some point $o\in M$. Since $A>0$, \eqref{eq:gradient field} implies that $o$ is a zero of $V$. Arguing as in \eqref{eq:distance bound}, we find that $V$ vanishes nowhere else. In fact, we obtain
\begin{equation}\label{eq:expander estimate}
\vert V\vert \ge \frac{\alpha}{2}d
\end{equation}
for some $\alpha>0$. On the other hand,
\begin{equation}\label{eq:expander distance}
\frac{d}{ds}d\circ\sigma=\inner{V\circ \sigma}{\cd d\circ \sigma}\le\vert V\vert\,.
\end{equation}
Since \eqref{eq:gradient field} holds, \eqref{eq:H estimate} also holds (with $V=\frac{1}{2}X^\top$) on an expander. Combining \eqref{eq:H estimate}, \eqref{eq:expander estimate} and \eqref{eq:expander distance}, we conclude that
\[
H\le H(o)\mathrm{e}^{-\frac{\alpha}{4}d^2}\,.
\]
The claim now follows as before.
\end{proof}

Putting these ingredients together yields the result.

\begin{thm}[Hamilton \cite{HamiltonPinched}]
Every pinched, convex hypersurface with bounded curvature in $\R^{n+1}$, $n\geq 2$, is either a hyperplane or compact.
\end{thm}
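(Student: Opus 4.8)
The plan is to run the argument sketched in the introduction, using the four preparatory propositions as black boxes. Suppose for contradiction that $M = \partial\Omega$ is a pinched convex hypersurface with bounded curvature in $\R^{n+1}$ which is neither a hyperplane nor compact; in particular $M$ is non-flat, so $\sup_M H > 0$, and (after discarding any flat factor as in the proof of Proposition \ref{prop:existence}) $\Omega$ contains no lines. By Proposition \ref{prop:existence} we may flow $M$ by mean curvature for a short time to arrange that the solution $\{\partial\Omega_t\}$ is smooth, convex, and has $\sup_{t\in[0,\delta]}\sup_{M_t}|A|<\infty$; by Proposition \ref{prop:pinching preserved} the pinching $\kappa_1\ge\alpha H$ is preserved. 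So we may as well assume from the outset that we have a smooth, convex, $\alpha$-pinched solution $\{M_t\}_{t\in[0,\delta]}$ with globally bounded curvature, and we extend it to its maximal time of existence $T\le\infty$ (finiteness of curvature on compact time subintervals is inherited since the solution stays convex and a local curvature bound of Ecker--Huisken type propagates from an exterior tangent ball; here one invokes Proposition \ref{prop:existence}-type estimates or simply that a non-flat convex solution has $H>0$ and, by the maximum principle, $\max H$ is finite on $[0,T']$ for each $T'<T$).

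\textbf{Case 1: $T<\infty$.} Then $\sup_{M_t}|A|\to\infty$ as $t\to T$ (by standard continuation, since convexity is preserved and the flow is proper). Pick a sequence $(p_j,t_j)$ with $t_j\to T$ and $\lambda_j\doteqdot H(p_j,t_j)\to\infty$ realizing an essential blow-up rate (a point-selection argument as in Proposition \ref{prop:curvature bound at infinity} gives $H\le 2\lambda_j$ on a definite parabolic neighbourhood of $(p_j,t_j)$ at the rescaled scale). Translate $(p_j,t_j)$ to the origin and parabolically rescale by $\lambda_j$. The rescaled flows are convex, uniformly $\alpha$-pinched, and have bounded curvature on a growing region; by Proposition \ref{prop:local pinching} applied on balls of radius $L\to\infty$ (the boundary-curvature term $\Theta$, measured at the rescaled scale on an annulus escaping to infinity, is controlled because the original curvature is bounded, so $C_\varepsilon\Theta\to 0$ after rescaling and $\varepsilon\to 0$) we conclude $|\mathring A|\equiv 0$ on the limit, i.e.\ the blow-up is totally umbilic. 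A convex, smooth, complete, non-flat totally umbilic hypersurface of $\R^{n+1}$ is a round sphere. Since a shrinking round sphere is compact, the original $M$ must already have been compact, contradicting our assumption. (To be careful with the limit extraction one uses Huisken's monotonicity / the pinching to rule out the degenerate cases and Ecker--Huisken interior estimates for smooth convergence on the good parabolic neighbourhood, exactly as in Proposition \ref{prop:curvature bound at infinity}.)

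\textbf{Case 2: $T=\infty$.} Now blow \emph{down} at time infinity in Hamilton's fashion: choose times $t_j\to\infty$ and points $p_j\in M_{t_j}$ with $H(p_j,t_j)=\max_{M_{t_j}}H$ (this max is attained, by Proposition \ref{prop:curvature bound at infinity}, since a convex solution with bounded curvature cannot have its curvature escape to spatial infinity), set $\lambda_j\doteqdot H(p_j,t_j)$, translate to the origin and rescale by $\lambda_j$. If $\lambda_j^2 t_j\to\infty$ the resulting sequence of flows is ancient; if $\lambda_j\to 0$ it is in fact an eternal (immortal-and-ancient) solution and Hamilton's Harnack inequality (valid here by convexity) forces $\partial_t H\ge 0$ along the flow, so after passing to a limit one obtains a translating soliton; if instead $\lambda_j$ stays bounded away from $0$ but $t_j$ was the ``parabolic'' scale one rescales around the origin in space and a fixed time to get an expanding self-similar solution, using that a convex ancient solution with bounded curvature which does not contract is asymptotically self-expanding. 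In all sub-cases the limit is a smooth, complete, locally uniformly convex (strict convexity passes to the limit because the Harnack-type argument and the splitting argument of Proposition \ref{prop:curvature bound at infinity} preclude a flat factor), $\alpha$-pinched translator or expander. But Proposition \ref{prop:translator and expander} says no such soliton exists. This contradiction completes Case 2.

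\textbf{Conclusion and main obstacle.} Having contradicted noncompactness in both cases, $M$ is a hyperplane or compact, proving the theorem. I expect the genuinely delicate step to be the extraction of the \emph{time-infinity} blow-down limit in Case 2 with all the desired structure --- complete, smooth, with bounded curvature, strictly (locally uniformly) convex, and inheriting the soliton equation --- rather than Case 1, which is the classical Huisken blow-up made local by Proposition \ref{prop:local pinching}. The subtleties are: (i) knowing the maximum of $H$ is attained on each timeslice and choosing the correct rescaling (translator vs.\ expander) according to whether $H(p_j,t_j)$ decays, which is where Proposition \ref{prop:curvature bound at infinity} and the Harnack inequality do the real work; (ii) ruling out a flat limit, again via the line-splitting/Harnack dichotomy; and (iii) the compactness theorem for the rescaled flows, which needs the uniform (in time) bound on curvature near any fixed point --- precisely the reason the bounded-curvature hypothesis cannot be dropped with the present method. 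Once the limit soliton is in hand, Proposition \ref{prop:translator and expander} closes the argument with no further effort.
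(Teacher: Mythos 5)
Your proposal follows the same overall strategy as the paper: flow the hypersurface using Proposition \ref{prop:existence}, preserve pinching via Proposition \ref{prop:pinching preserved}, and split on whether the maximal time $T$ is finite (blow-up to a shrinking sphere via the local umbilic estimate, forcing compactness) or infinite (blow-down to a translator or expander, ruled out by Proposition \ref{prop:translator and expander}). Case 1 is essentially correct; the paper's version is slightly cleaner, since Proposition \ref{prop:curvature bound at infinity} implies $\vert A\vert\to 0$ at spatial infinity for each fixed positive time, so one simply takes $(X_j,t_j)$ realizing $\max_{[0,t_j]}\max_{M_t}H$ rather than doing fresh point selection. The key input --- that the blow-up sequence inherits vanishing $\vert\mathring A\vert/H$ because the lower-order term $C_\varepsilon\Theta$ in \eqref{eq:m convexity} is scale-invariant and tends to zero relative to the blow-up scale $\lambda_j$ --- is the right idea, and the paper fixes $\Theta=\sup_{M_0}H$ using the decay at infinity.

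Your Case 2 is where the proposal is genuinely loose. The dichotomy you propose (``$\lambda_j^2 t_j\to\infty$'' vs.\ ``$\lambda_j\to 0$'' vs.\ ``$\lambda_j$ bounded away from zero'') is neither exhaustive nor associated with a single well-defined rescaling scheme, and the translator/expander assignments are swapped. The paper uses the standard and precise dichotomy: if the flow is type-III, i.e.\ $\Lambda\doteqdot\sup_t\sqrt{t}\max_{M_t}H<\infty$, one rescales by $\lambda_j=1/\sqrt{t_j}$; the resulting limit lives on $(-1,\infty)$ (not eternal), and the rigidity case of the differential Harnack inequality (since $\sqrt{t+1}H_\infty(\cdot,t)$ is constant in $t$) forces a nontrivial \emph{expander}. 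If the flow is type-IIb, one picks $(x_j,t_j)$ maximizing $t(j-t)H^2(\cdot,t)$ on $M\times[0,j]$ and rescales by $\lambda_j=H(x_j,t_j)$; the limit is then eternal with $\max H$ attained at the spacetime origin, and the Harnack rigidity forces a \emph{translator}. Your phrase ``if $\lambda_j\to 0$ it is in fact an eternal solution\ldots so one obtains a translating soliton'' is not correct under either scheme: $\lambda_j=1/\sqrt{t_j}\to 0$ is exactly the type-III rescaling, whose limit is \emph{not} eternal and yields an expander. You should also be aware that the attainment of $\max_{M_t}H$ at each time does follow from Proposition \ref{prop:curvature bound at infinity} together with pinching (since $\kappa_1/H\ge\alpha$ everywhere forces $H<\delta R^{-1}$ outside a compact set), as you say, but one still needs the careful $t(j-t)H^2$ point-picking in the type-IIb case to get eternal time intervals in the limit. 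Since Proposition \ref{prop:translator and expander} rules out both expanders and translators, the confusion in your sub-case labelling is not fatal to the strategy, but as written your Case 2 argument would not withstand scrutiny without the precise dichotomy and rescaling choices.
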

\begin{proof}
So let $M=\pd\Omega\subset \R^{n+1}$, $n\ge 2$, be a convex hypersurface with bounded curvature which is $\alpha$-pinched for some $\alpha>0$. 

By Proposition \ref{prop:existence}, we obtain a family $\{M_t\}_{t\in[0,T)}$ of convex boundaries $M_t=\pd\Omega_t$ with $M_0=M$ which evolve by mean curvature and satisfy $\sup_{t\in[0,\sigma]}\sup_{M_t}\vert A\vert<\infty$ for all $\sigma\in[0,T)$, and either $T=\infty$, or 
\[
\limsup_{t\to T}\sup_{M_t}\vert A\vert=\infty\,.
\]
By Proposition \ref{prop:pinching preserved}, $M_t$ is $\alpha$-pinched for each $t\in(0,T)$.
By applying the strong maximum principle to the evolution equation for $H$ (see, for example, \cite[Equation (6.18)]{EGF}), we may assume that $H>0$ on $M_t$ for all $t\in(0,T)$.

\subsection*{Case 1: $T<\infty$} Since Proposition \ref{prop:curvature bound at infinity} implies that
\begin{equation}\label{eq:type III at infinity}
\limsup_{\vert X\vert\to\infty}\vert A_{(X,t)}\vert^2\le 0\;\;\text{for all}\;\; t\in(0,T)\,,
\end{equation}
the local umbilic estimate (Proposition \ref{prop:local pinching}) yields
\begin{equation}\label{eq:umbilic}
\vert \mathring A\vert
\le \varepsilon H+C(n,\alpha,\varepsilon)\Theta
\end{equation}
for every $\varepsilon>0$, where $\Theta\doteqdot \sup_{M_{0}}H<\infty$. 

Since $\vert A_{(X,t)}\vert\to 0$ as $\vert X\vert\to\infty$, we can find a sequence of times $t_j\to T$ and points $X_j\in M_{t_j}$ such that
\[
\lambda_j\doteqdot H(X_j,t_j)=\max_{t\in [0,t_j]}\max_{M_{t}}H>0\,.
\]
Translating $(X_j,t_j)$ to the spacetime origin and rescaling by $\lambda_j$ yields a sequence of mean curvature flows $\{M^j_t\}_{t\in(-\lambda_j^2t_j, 0]}$ defined by $M^j_t\doteqdot \lambda_j(M_{\lambda_j^{-2}t+t_j}-X_j)$. Since $\max_{M^j_t}H\le 1=H(0,0)$ and $\lambda_j\to\infty$ as $j\to\infty$, a subsequence of the rescaled flows converges locally uniformly in $C^\infty(\R^{n+1}\times(-\infty,0])$ to an ancient flow $\{M^\infty_t\}_{t\in(-\infty,0]}$. By \eqref{eq:umbilic}, this limit is umbilic, and hence a shrinking sphere. So $M$ is compact.

\subsection*{Case 2: $T=\infty$.} 

Suppose first that the flow is \emph{type-III}; that is, $\Lambda\doteqdot \sup_{t\in(0,\infty)}\sqrt{t}\max_{M_t}H<\infty$.
Given a sequence of times $t_j>0$ with $t_j\to\infty$, consider the rescaled flow $\{M^j_t\}_{t\in(-\lambda_j^2t_j,\infty)}$ defined by $M^j_t\doteqdot \lambda_jM_{\lambda_j^{-2}t+t_j}$, where $\lambda_j\doteqdot \frac{1}{\sqrt{t_j}}$. Since $\lambda_j^2t_j\equiv 1$ and
\bann
H_j(\cdot,t)={}&\lambda_j^{-1}H(\cdot,\lambda_j^{-2}t+t_j)\le\frac{\Lambda}{\sqrt{t+1}}\,,
\eann
we obtain a subsequential limit defined for $t\in(-1,\infty)$. Furthermore,
\[
\sqrt{t+1}\,H_\infty(\cdot,t)=\lim_{j\to\infty}\sqrt{t+1}H_j(\cdot,t)=\lim_{j\to\infty}\sqrt{\frac{t+1}{\lambda_j^2}}H\left(\cdot,\frac{t+1}{\lambda_j^2}\right)\,.
\]
By the differential Harnack inequality and the type-III hypothesis, the limit on the right exists (and is positive) independently of $t$. Thus, by the rigidity case of the differential Harnack inequality, the limit is a nontrivial expander, which violates Proposition \ref{prop:translator and expander}.

So suppose that the flow is \emph{type-IIb}; that is, $\sup_{t\in(0,\infty)}\sqrt{t}\max_{M_t}H=\infty$. For each $j$, choose $(x_j,t_j)$ such that
\[
t_j(j-t_j)H^2(x_j,t_j)=\max_{M\times [0,j]}t(j-t)H^2(\cdot,t)
\]
and set $\lambda_j\doteqdot H(x_j,t_j)$. The corresponding rescaled flows satisfy
\bann
H_j(\cdot,t_j)
\le{}&\sqrt{\frac{-\alpha_j}{t-\alpha_j}\frac{\omega_j}{\omega_j-t}}
\eann
for all $t\in(\alpha_j,\omega_j)$, where $\alpha_j\doteqdot -\lambda_j^{2}t_j$ and  $\omega_j\doteqdot \lambda_j^{2}(j-t_j)$. Since
\bann
\frac{1}{\omega_j^{-1}-\alpha_j^{-1}}
\ge{}& \frac{1}{2}\max_{M\times[0,j/2]}tH^2\,,
\eann
we find that $\alpha_j\to-\infty$ and $\omega_j\to\infty$, and hence obtain an eternal limit flow $\{M_t\}_{t\in(-\infty,\infty)}$. Since $\max H$ is attained at the spacetime origin (where it is positive), the differential Harnack inequality implies that $\{M_t\}_{t\in(-\infty,\infty)}$ evolves by translation. This violates Proposition \ref{prop:translator and expander}. 
\end{proof}

\bibliographystyle{plain}
\bibliography{../../bibliography}

\end{document}

\appendix

\section{Ideas for removing the curvature bound}

We want to prove something like the following.

\begin{claim}
Let $\Omega\subset \R^{n+1}$ be a convex body with smooth, locally uniformly convex boundary $\pd\Omega$. Suppose that $0\in\Omega$. Given any $\varepsilon>0$, there exist $h$ and $L$ such that, for any $X\in \R^{n+1}\setminus B_{LR}$,
\[
H(X)\ge hR^{-1}\;\;\implies\;\;\inf_{B_{\frac{h}{2H(X)}}}\frac{\kappa_1}{H}\le \varepsilon.
\]
\end{claim}

This could be proved in a similar manner as Proposition \ref{prop:curvature bound at infinity} if we had a H\"older estimate for $H$:

Suppose the claim is false, then there exist $\varepsilon>0$ and, for each $j\in\N$, a point $X_j\in\pd\Omega$ such that
\[
\vert X_j\vert\to\infty,\;\; H(X_j)\ge j\;\;\text{and yet}\;\;\inf_{B_{\frac{j}{2H(X_j)}}}\frac{\kappa_1}{H}>\varepsilon.
\]
By point-picking, we find $Y_j\in B_{\frac{j}{2H(X_j)}}(X_j)$ such that $H(Y_j)\ge H(X_j)$ and $H\le 2 H(Y_j)$ in $B_{\frac{j}{4H(Y_j)}}(Y_j)$. Set $\lambda_j\doteqdot H(Y_j)$. After passing to a subsequence, the convex bodies $\Omega_j\doteqdot \lambda_j(\Omega-Y_j)$ converge in the Hausdorff topology on compact subsets of $\R^{n+1}$ to a limit body $\Omega_\infty$ which splits off a line. If the convergence was in $C^2$, we would arrive at a contradiction. As it stands, it is only in $C^{1,\alpha}$. Here are some ideas...

(1) Myers' theorem and the Gauss equation immediately imply that
\[
\min_{\gamma}H\le \frac{1}{\varepsilon d}
\]
on any geodesic $\gamma$ of length at least $\pi d$ contained in a region satisfying $\kappa_1\ge \varepsilon H$. Thus,
\[
B_{r}(X)\subset B_{\frac{j}{2H(X_j)}}\;\;\implies\;\; \inf_{B_r}H\le \frac{1}{2\varepsilon r}\,.
\]

(2) Since the normal is continuous under the above convergence,
\[
\int_{\pd\Omega_j\cap B_{j/4}}H_j^n\le \left(\frac{n}{\varepsilon}\right)^n\area(\mathrm{G}(\pd\Omega_j\cap B_{j/4}))\to 0\,,
\]
where $\mathrm{G}$ is the Gauss map. In particular, $H_j\to 0$ a.e. in any compact subset of $\R^{n+1}$. Can we bootstrap this somehow?

(3) Simons' identity gives a little bit of higher regularity. Indeed, defining the skew-symmetric parts
\[
\Lambda_{ijkl}\doteqdot \frac{1}{2}\left(\cd_{(i}\cd_{j)}A_{kl}-\cd_{(k}\cd_{l)}A_{ij}\right)
\]
of $\cd_{(i}\cd_{j)}A_{kl}$, and
\[
\mathrm{C}_{ijkl}\doteqdot \frac{1}{2}\left(A_{ij}A^2_{kl}-A_{kl}A^2_{ij}\right)
\]
of $A_{ij}A^2_{kl}$, Simons' identity states that
\[
\Lambda=\mathrm C\,.
\]
Pinching implies that $\mathrm C$ controls $A_{ij}A^2_{kl}$. Does this mean that $\Lambda$ controls $\cd_{(i}\cd_{j)}A_{kl}$?

(4) Might also be possible to integrate (some appropriately contracted version of) Simons' identity by parts to obtain some weak control on $\cd H$.

\bibliographystyle{plain}
\bibliography{../../bibliography}

\end{document}